\def\thefigure{\thesection.\@arabic\c@figure}
\def\fps@figure{h,t}
\def\thetable{\thesection.\@arabic\c@table}
\def\fps@table{h, t}
\newtheorem{theorem}{Theorem}
\newtheorem{corollary}[theorem]{Corollary}
\newtheorem{lemma}[theorem]{Lemma}
\newtheorem{notation}[theorem]{Notation}
\newtheorem{proposition}[theorem]{Proposition}
\newtheorem{remark}[theorem]{Remark}
\numberwithin{theorem}{section}
\numberwithin{equation}{section}
\renewcommand{\1}{{\bf 1}}
\newcommand{\co}{{\rm co}}
\newcommand{\de}{{\rm d}}
\newcommand{\ext}{{\rm ext}}
\newcommand{\Hom}{{\rm Hom}}
\newcommand{\ie}{{\rm i}}
\newcommand{\Ker}{{\rm Ker}\,}
\newcommand{\supp}{{\rm supp}\,}
\newcommand{\tr}{{\rm tr}}
\newcommand{\CC}{{\mathbb C}}
\newcommand{\TT}{{\mathbb T}}
\newcommand{\Bc}{{\mathcal B}}
\newcommand{\Cc}{{\mathcal C}}
\newcommand{\Hc}{{\mathcal H}}
\newcommand{\Jc}{{\mathcal J}}
\title[Traces of $C^*$-algebras of  solvable groups]{Traces of $C^*$-algebras of\\ connected solvable groups}
\author{Ingrid Belti\c t\u a}
\author{Daniel Belti\c t\u a}
\address{Institute of Mathematics ``Simion Stoilow'' of the Romanian Academy,
P.O. Box 1-764, Bucharest, Romania}
\email{Ingrid.Beltita@imar.ro, ingrid.beltita@gmail.com}
\email{Daniel.Beltita@imar.ro, beltita@gmail.com}
\keywords{solvable group; solvable Lie group;  group $C^*$-algebra; trace}
\thanks{2020 \textit{Mathematics Subject Classification.} 
Primary 22D25; 
Secondary 46L30}
\begin{document}

\parskip=5pt

\begin{abstract}
	We give an explicit description of the tracial state simplex of the $C^*$-algebra $C^*(G)$ of an arbitrary connected, second countable, locally compact, solvable group $G$. We show that every tracial state of $C^*(G)$ lifts from a tracial state of the $C^*$-algebra of the abelianized group, and  the intersection  of the kernels of all the tracial states of $C^*(G)$ is a proper ideal unless $G$ is abelian. 
	As a consequence, the $C^*$-algebra of a connected solvable nonabelian Lie group  cannot embed into a simple unital AF-algebra. 
\end{abstract}

\maketitle

\section{Introduction}

If $G$ is a locally compact group, then any of its Haar measures leads via integration to a multiplicative linear functional on  $C^*(G)$, 
which is in particular a tracial state. 
Moreover, 
the following assertions are equivalent: 
\begin{itemize}
\item $G$ is amenable; 
\item the reduced group $C^*$-algebra $C^*_r(G)$ is nuclear and admits a tracial state. 
\end{itemize}
See \cite[Th. 8]{Ng15} and also \cite[Cor. 3.3]{FSW17} and \cite{KR17}.
There arises the natural question of finding an explicit description of the set of all tracial states of group $C^*$-algebras of amenable locally compact groups. 
In this paper we answer that question for connected, second countable, locally compact, solvable groups, which includes all connected solvable Lie groups 
(Theorem~\ref{P6}\eqref{P6_item1bis}).
To this end we prove that the extreme points of the tracial simplex of 
group $C^*$-algebras  
correspond to the 1-dimensional group representations (Proposition~\ref{P5}). 
This solves the above problem via the Kre\u\i n-Milman theorem.

As an application, we show that 
if the $C^*$-algebra of such a group admits a faithful tracial state, then the group under consideration is necessarily abelian 
(Corollary~\ref{P6_item3}). 
This implies that the $C^*$-algebra of a connected solvable \emph{nonabelian} Lie group never embeds into a \emph{simple unital} AF-algebra (Corollary~\ref{simple_QD_emb}). 
In addition to our earlier examples of amenable connected Lie groups whose group $C^*$-algebras are not AF-embeddable \cite{BB18}, this is still another aspect of AF-embeddability theory of group $C^*$-algebras which points out the sharp contrast between the behaviours of the  $C^*$-algebras of discrete countable groups and connected Lie groups. 
We recall that the $C^*$-algebra of every amenable discrete countable group embeds into a simple unital AF-algebra, namely, the universal UHF-algebra \cite[Th. B]{Sc00}.

\section{Main result} 

\begin{notation}
\normalfont 
For an arbitrary $C^*$-algebra $A$ with its topological dual space $A^*$ we will use the following notation: 
\begin{itemize}
	\item $A^*_+:=\{\varphi\in A^*\mid 0\le \varphi\}$; 
	\item $A^*_{+,\tr}:=\{\varphi\in A^*_+\mid (\forall a,b\in A)\ \varphi(ab)=\varphi(ba)\}$; 
	\item $T(A):=\{\tau\in A^*_{+,\tr}\mid\Vert\tau\Vert=1\}$, 
	$T_{\le 1}(A):=\{\tau\in A^*_{+,\tr}\mid\Vert\tau\Vert\le 1\}$; 
	\item $\ext\, S:=\{\chi\in S\mid \chi\text{ is an extreme point of } S\}$ for any subset $S\subseteq A^*$; 
	\item $\overline{\co}\, S$ stands for the weak$^*$-closure of the convex hull of any subset  $S\subseteq A^*$. 
\end{itemize}
We also note the equality 
\begin{equation}\label{scalars}
T_{\le 1}(A)=\{t\varphi\mid t\in[0,1],\ \varphi\in T(A)\}.
\end{equation}
The elements of $T(A)$ are called \emph{tracial states} of $A$, and the sets $T(A)\subseteq T_{\le 1}(A)$ are regarded as a topological subspaces of~$A^*$ with respect to the weak$^*$ topology of~$A^*$. 
For every $\tau\in \ext\, T_{\le 1}(A)$ with $\Vert\tau\Vert\ne 0$ 
we necessarily have $\Vert \tau\Vert=1$, hence $\tau\in T(A)$, 
and then  $\tau\in  T(A)\cap \ext\, T_{\le 1}(A)\subseteq \ext\, T(A)$. 
Thus 
$\ext\,(T_{\le 1}(A))\subseteq \{0\}\cup \ext\, (T(A))\subseteq T_{\le 1}(A)$
hence, taking $\overline{\co}(\cdot)$ in these inclusions, we obtain 
\begin{equation}\label{Krein-Milman}
T_{\le 1}(A)
=\overline{\co}(\{0\}\cup \ext\, T(A))
\end{equation} 
since the set $T_{\le 1}(A)$ is compact  and convex, therefore 
$T_{\le 1}(A)=\overline{\co}(\ext\,T_{\le 1}(A))$ 
by the Kre\u\i n-Milman theorem. 
(In fact, by a slightly more elaborate argument, 
$\ext\,T_{\le 1}(A)=\{0\}\cup \ext\, T(A)$
by \cite[Prop. 6.8.7(ii)]{Di64},  since the points of $\ext\,T(A)$ are the finite characters of norm~1 in the sense of \cite[6.7.1]{Di64}.)

We also note for later use in the proof of Theorem~\ref{P6}\eqref{P6_item1} that for any subset $S\subseteq A^*$ we have 
\begin{equation}\label{kernels}
\bigcap\limits_{\varphi\in \overline{\co}\, (\{0\}\cup S)}\Ker\varphi
=\bigcap\limits_{\varphi\in S}\Ker\varphi
=\bigcap\limits_{t\in[0,1]}\bigcap\limits_{\varphi\in S}\Ker(t\varphi)
=\bigcap\limits_{\varphi\in \bigcup_{t\in[0,1]}tS}\Ker\varphi
\end{equation} 

For any $\varphi\in A^*_+$ we denote by $(\pi_\varphi,\Hc_\varphi,\xi_\varphi)$ the output of its corresponding GNS construction; thus $\pi_\varphi\colon A\to\Bc(\Hc_\varphi)$ is a $*$-representation with a cyclic vector $\xi_\varphi\in\Hc_\varphi$ satisfying 
\begin{equation}\label{GNS}
(\forall a\in A)\quad \varphi(a)=(\pi_\varphi(a)\xi_\varphi\mid\xi_\varphi).
\end{equation}
The notation $J\trianglelefteq A$ indicates that $J$ is a closed two-sided ideal of the $C^*$-algebra~$A$. 
\end{notation}

We prove the following lemma for completeness, although its assertions are mostly known, as indicated in Remark~\ref{characters} below. 

\begin{lemma}\label{L1}
For every $\varphi\in A^*_{+,\tr}$ the following assertions hold: 
\begin{enumerate}[{\rm(i)}]
	\item\label{L1_item1} 
	One has $A^+\cap \Ker\varphi=A^+\cap \Ker\pi_\varphi$. 
	\item\label{L1_item2} 
	The functional $\tau_\varphi\colon \pi_\varphi(A)''\to\CC$, $\tau_\varphi(T):=(T\xi_\varphi\mid\xi_\varphi)$, is a faithful normal tracial positive functional hence the von Neumann algebra $\pi_\varphi(A)''$ is finite. 
	\item\label{L1_item3} 
	If $\varphi\in\ext\, T(A)$, then the von Neumann algebra $\pi_\varphi(A)''$ is a factor. 
\end{enumerate}
\end{lemma}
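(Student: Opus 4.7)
\medskip

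\noindent\textbf{Proof plan.} The three assertions build on one another in a standard way. All three essentially exploit the trace property of $\varphi$, the cyclicity of $\xi_\varphi$, and Cauchy--Schwarz applied to $\varphi$ and to the vector state $\tau_\varphi$.

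For \eqref{L1_item1}, the inclusion $A^+\cap\Ker\pi_\varphi\subseteq A^+\cap\Ker\varphi$ is immediate from \eqref{GNS}. For the converse, given $a\in A^+$ with $\varphi(a)=0$, I would set $b:=a^{1/2}$ and estimate
\[
\Vert\pi_\varphi(b)\pi_\varphi(c)\xi_\varphi\Vert^2=\varphi(c^*b^*bc)=\varphi(c^*ac)=\varphi(acc^*)
\]
by the trace property, then apply the Cauchy--Schwarz inequality to the positive functional $\varphi$ to bound $\vert\varphi(b\cdot bcc^*)\vert^2\le\varphi(b^2)\varphi((bcc^*)^*(bcc^*))=0$. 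Cyclicity of $\xi_\varphi$ then forces $\pi_\varphi(b)=0$, hence $\pi_\varphi(a)=\pi_\varphi(b)^2=0$.

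For \eqref{L1_item2}, that $\tau_\varphi$ is a normal positive functional on $\pi_\varphi(A)''$ is built into its definition as a vector state. To promote the tracial identity from the dense $*$-subalgebra $\pi_\varphi(A)$, where it holds by definition of $\varphi$, to all of $\pi_\varphi(A)''$, I would fix $T\in\pi_\varphi(A)$ and invoke Kaplansky's density theorem to approximate any $S\in\pi_\varphi(A)''$ strongly by a bounded net in $\pi_\varphi(A)$; the strong continuity of $S\mapsto(ST\xi_\varphi\mid\xi_\varphi)$ and $S\mapsto(TS\xi_\varphi\mid\xi_\varphi)$ on bounded sets passes the identity $\tau_\varphi(ST)=\tau_\varphi(TS)$ to the closure, and then one repeats the argument in the other variable. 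For faithfulness, $\tau_\varphi(T)=0$ with $T\ge0$ gives $T^{1/2}\xi_\varphi=0$; using the trace property already established, for every $R\in\pi_\varphi(A)$ one computes $(TR\xi_\varphi\mid R\xi_\varphi)=\tau_\varphi(R^*TR)=\tau_\varphi(TRR^*)=(T^{1/2}\xi_\varphi\mid T^{1/2}RR^*\xi_\varphi)=0$, so cyclicity of $\xi_\varphi$ yields $T=0$. Finiteness of $\pi_\varphi(A)''$ then follows from the existence of a faithful normal tracial state.

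For \eqref{L1_item3}, since the center of any von Neumann algebra is generated by its projections, it suffices to show that every projection $P$ in $Z(\pi_\varphi(A)'')$ is trivial. Assuming $0<P<I$, faithfulness of $\tau_\varphi$ gives $t:=\tau_\varphi(P)\in(0,1)$, and the formulas $\varphi_1(a):=t^{-1}\tau_\varphi(\pi_\varphi(a)P)$, $\varphi_2(a):=(1-t)^{-1}\tau_\varphi(\pi_\varphi(a)P^\perp)$ define, thanks to the centrality of $P$ and the trace property of $\tau_\varphi$, two elements of $T(A)$ with $\varphi=t\varphi_1+(1-t)\varphi_2$ (norm $1$ is checked against an approximate identity of $A$, using that $\pi_\varphi(e_\lambda)\to I$ strongly). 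Extremality forces $\varphi_1=\varphi=\varphi_2$, so $\tau_\varphi(\pi_\varphi(a)(P-tI))=0$ for every $a\in A$; extending by the Kaplansky argument from \eqref{L1_item2} and taking $S=P-tI\in\pi_\varphi(A)''$ gives $\tau_\varphi((P-tI)^2)=0$, whence $P=tI$ by faithfulness, contradicting $P^2=P$.

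The only step with any real subtlety is the passage from $\pi_\varphi(A)$ to $\pi_\varphi(A)''$ in the trace identity in \eqref{L1_item2}; it reappears as the decisive ingredient for the contradiction in \eqref{L1_item3}, so that is where I would be most careful.
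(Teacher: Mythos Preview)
Your argument is correct and follows essentially the same route as the paper's proof. The only cosmetic differences are that the paper works with $a^{1/4}$ rather than $a^{1/2}$ in \eqref{L1_item1}, is terser about the density step in \eqref{L1_item2}, and in \eqref{L1_item3} derives the contradiction by showing directly that $P\xi_\varphi\in\{0,\xi_\varphi\}$ rather than via your faithfulness argument yielding $P=tI$; none of this changes the substance.
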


\begin{proof}
\eqref{L1_item1} 
It folows by \cite[Cor. 2.4.10]{Di64} that $\Ker\pi_\varphi$ is the largest closed two-sided ideal of $A$ contained in $\Ker\varphi$.

To prove that $A^+\cap \Ker\varphi\subseteq\Ker\pi_\varphi$ let $a\in A^+$ arbitrary with $\varphi(a)=0$. 
For every $b\in A$ one has by the Schwarz inequality 
$\vert\varphi(a^{1/2}b)\vert^2\le\varphi(a)\varphi(b^*b)=0$, hence $a^{1/2}b\in\Ker \varphi$. 
Then for arbitrary $c\in A$ one has 
$$\Vert \pi_\varphi(a^{1/4})\pi_\varphi(c)\xi_\varphi\Vert^2
=(\pi_\varphi(a^{1/2})\pi_\varphi(c)\xi_\varphi\mid \pi_\varphi(c)\xi_\varphi)
=\varphi(c^*a^{1/2}c)=\varphi(a^{1/2}cc^*)
=0$$
by the above remark, with $b=cc^*$. 
Thus $\pi_\varphi(a^{1/4})\pi_\varphi(c)\xi_\varphi=0$ for all $c\in A$. 
Since $\pi_\varphi(A)\xi_\varphi$ is dense in $\Hc_\varphi$, 
we then obtain $\pi_\varphi(a^{1/4})=0$, hence $\pi_\varphi(a)=0$. 

\eqref{L1_item2} 
It is clear that $\tau_\varphi$ is a normal positive functional.  
The set $\pi_\varphi(A)$ is dense in $\pi_\varphi(A)''$ with respect to the strong operator topology (by the bicommutant theorem).
Therefore, 
in order to see that $\tau_\varphi$ is a tracial functional, it suffices to show that $\tau_\varphi\vert_{\pi_\varphi(A)}$ is tracial. 
The later property follows by $\varphi\in A^*_{+,\tr}$ since 
for all $a\in A$ one has $\tau_\varphi(\pi_\varphi(a))=\varphi(a)$ 
by \eqref{GNS}. 

The fact that 
$\tau_\varphi$ is faithful follows as in the proof of \cite[Lemma 4.2]{BB18}: 
Let $T\in\pi_\varphi(A)''$ 
with $\tau_\varphi(T^*T)=0$, that is,  $T\xi_\varphi=0$. 
Then for every $a\in A$, 
$$\Vert T\pi_\varphi(a)\xi_\varphi\Vert^2
=(\pi_\varphi(a)^*T^*T\pi_\varphi(a)\xi_\varphi \mid \xi_\varphi)
=\tau_\varphi(\pi_\varphi(a)^*T^*T\pi_\varphi(a)).$$
Since  $\tau_\varphi$ is a tracial state of $\pi_\varphi(A)''$, it follows that 
$$\Vert T\pi_\varphi(a)\xi_\varphi\Vert^2
=\tau_\varphi(\pi_\varphi(a)\pi_\varphi(a)^*T^*T)
=(\pi_\varphi(a)\pi_\varphi(a)^*T^*T \xi_\varphi \mid \xi_\varphi)=0,$$
Thus $T\pi_\varphi(a)\xi_\varphi=0$ for every $a\in A$,  hence $T=0$,

\eqref{L1_item3} 
For every central projection 
$p=p^*=p^2\in \pi_\varphi(A)''\cap\pi_\varphi(A)'$ 
we define 
$$\varphi_p(\cdot)
:=(\pi_\varphi(\cdot)p\xi_\varphi\mid p\xi_\varphi)
=(\pi_\varphi(\cdot)\xi_\varphi\mid p\xi_\varphi)$$
so that $\varphi=\varphi_p+\varphi_{\1-p}$ and $\varphi_p,\varphi_{\1-p}\in A^*_+$, where $\1\in\Bc(\Hc_\varphi)$ is the identity operator. 
Moreover, since $p\in \pi_\varphi(A)''$ and $\pi_\varphi(A)$ is dense in $\pi_\varphi(A)''$ with respect to the strong operator topology, 
there exists a net $\{c_i\}_{i\in I}$ in $A$ with 
$p=\lim\limits_{i\in I}\pi_\varphi(c_i)$ in the strong operator topology in $\Bc(\Hc_\varphi)$. 
Then for arbitrary $a,b\in A$ we obtain
\begin{align*}
\varphi_p(ab)
&=(\pi_\varphi(ab)p\xi_\varphi\mid \xi_\varphi)
=(\pi_\varphi(a)p\pi_\varphi(b)\xi_\varphi\mid \xi_\varphi)
=\lim\limits_{i\in I}(\pi_\varphi(ac_ib)\xi_\varphi\mid \xi_\varphi) \\
&=\lim\limits_{i\in I}\varphi(ac_ib)
=\lim\limits_{i\in I}\varphi(c_iba)
=\lim\limits_{i\in I}(\pi_\varphi(c_iba)\xi_\varphi\mid \xi_\varphi)
=(p\pi_\varphi(ba)\xi_\varphi\mid \xi_\varphi) \\
&=\varphi_p(ba), 
\end{align*}
by \eqref{GNS} and the fact that $\varphi$ is a tracial functional.
Thus $\varphi_p\in A^*_{+,\tr}$ 
and similarly $\varphi_{\1-p}\in A^*_{+,\tr}$. 
If $p\xi_\varphi\ne0$ and $(\1-p)\xi_\varphi\ne0$, then 
$\psi_0:=\frac{1}{\Vert p\xi_\varphi\Vert^2}\varphi_p\in T(A)$ 
and  $\psi_1:=\frac{1}{\Vert (\1-p)\xi_\varphi\Vert^2}\varphi_{\1-p}\in T(A)$ 
and moreover $\varphi=\Vert p\xi_\varphi\Vert^2\psi_0+\Vert (\1-p)\xi_\varphi\Vert^2\psi_1$, where $\Vert p\xi_\varphi\Vert^2+\Vert (\1-p)\xi_\varphi\Vert^2=\Vert\xi_\varphi\Vert^2=\Vert \varphi\Vert=1$, 
hence we obtain a contradiction with the hypothesis $\varphi\in\ext\, T(A)$. 

Therefore one must have either $p\xi_\varphi=0$ or $(\1-p)\xi_\varphi=0$, 
that is, either $p\xi_\varphi=\xi_\varphi$ or $p\xi_\varphi=0$.  
Then, using the fact that $\pi_\varphi(A)\xi_\varphi$ is dense in $\Hc_\varphi$ and $p\in\pi_\varphi(A)'$, we obtain either $p=\1$ or $p=0$. 
Since $p$ is an arbitrary orthogonal projection in the center of $\pi_\varphi(A)''$, it follows that $\pi_\varphi(A)''$ is a factor. 
\end{proof}

\begin{remark}\label{characters}
\normalfont
Lemma~\ref{L1}\eqref{L1_item2} can also be derived from \cite[Prop. 6.8.3]{Di64}. 
As noted  above,
 the points of $\ext\, T(A)$ are the finite characters of norm~1, therefore 
the conclusion of Lemma~\ref{L1}\eqref{L1_item3} can be obtained by \cite[Th. 6.7.3]{Di64}.
We also recall that the finite characters of norm~1 correspond bijectively to the quasi-equivalence classes of nontrivial finite-factor representations by \cite[Cor. 6.8.6]{Di64}. 
\end{remark}

	For an arbitrary locally compact group $G$ 
	we denote 
	\begin{equation}\label{defG1}
	G^{(1)}:= \text{ the closed subgroup generated by} \;\;  \{ghg^{-1}h^{-1}\mid g,h\in G\}.
	\end{equation}
	Then $G^{(1)}$ is a closed normal subgroup of $G$ and, by \cite[Prop. 8.C.8]{BkHa19},  there exists a natural surjective $*$-morphism 
	$$\theta\colon C^*(G)\to C^*(G/G^{(1)}).$$ 
	Hence, denoting 
\begin{equation}\label{Jc}	
\Jc_G:=\Ker\theta, 
\end{equation}
	we obtain the short exact sequence of $C^*$-algebras
	\begin{equation}\label{quotient}
	0\to\Jc_G\hookrightarrow C^*(G)\mathop{\longrightarrow}\limits^\theta 
	C^*(G/G^{(1)})\to 0.
	\end{equation}
	
	\begin{lemma}\label{R4}
		We have that
	\begin{equation}\label{commutative}
	\Jc_G=\{0\} \iff G\text{ is commutative}. 
	\end{equation} 
	\end{lemma}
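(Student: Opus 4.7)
The implication ``$G$ commutative $\Rightarrow \Jc_G=\{0\}$'' is immediate: if $G$ is abelian then every commutator $ghg^{-1}h^{-1}$ equals the identity $e$, hence $G^{(1)}=\{e\}$, so $G/G^{(1)}=G$, the natural map $\theta$ is (up to canonical identification) the identity of $C^*(G)$, and $\Jc_G=\{0\}$.

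For the converse, suppose $\Jc_G=\{0\}$. Then by \eqref{quotient} the $*$-morphism $\theta$ is both surjective and injective, hence is a $*$-isomorphism $C^*(G)\to C^*(G/G^{(1)})$. The plan is to show that this forces every strongly continuous unitary representation of $G$ to be trivial on $G^{(1)}$, after which the faithfulness of the left regular representation yields $G^{(1)}=\{e\}$. Let $\lambda\colon G\to\Bc(L^2(G))$ be the left regular representation; it is strongly continuous and injective as a group homomorphism. Its integrated form $\tilde\lambda\colon C^*(G)\to\Bc(L^2(G))$ factors as $\tilde\lambda=\sigma\circ\theta$, where the nondegenerate $*$-representation $\sigma:=\tilde\lambda\circ\theta^{-1}$ of $C^*(G/G^{(1)})$ is the integrated form of a unique strongly continuous unitary representation $\rho\colon G/G^{(1)}\to\Bc(L^2(G))$.

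By the construction of $\theta$ given in \cite[Prop. 8.C.8]{BkHa19}, pulling $\sigma$ back along $\theta$ yields exactly the integrated form of $\rho\circ q\colon G\to\Bc(L^2(G))$, where $q\colon G\to G/G^{(1)}$ is the quotient map. Thus $\tilde\lambda$ and the integrated form of $\rho\circ q$ coincide, and by the bijective correspondence between strongly continuous unitary representations of $G$ and nondegenerate $*$-representations of $C^*(G)$ we obtain $\lambda=\rho\circ q$. In particular $\lambda$ is trivial on $G^{(1)}$, and faithfulness of $\lambda$ as a group homomorphism then forces $G^{(1)}=\{e\}$, i.e.\ $G$ is commutative. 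The only delicate point is the functoriality statement that identifies $\theta$, at the level of integrated forms, with the $*$-morphism induced by the continuous surjective homomorphism $q$; once this is granted, the rest of the argument is a direct application of the universal property of $C^*(G)$ and the faithfulness of $\lambda$ on $G$.
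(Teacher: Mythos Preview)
Your proof is correct, but it takes a different route from the paper's. The paper argues as follows for the converse: since $\theta$ is a $*$-isomorphism and $G/G^{(1)}$ is abelian, $C^*(G)\cong C^*(G/G^{(1)})$ is commutative; hence its dense $*$-subalgebra $L^1(G)$ is commutative, which forces $G$ to be abelian by a classical result (cited from Rickart and Hewitt--Ross). You instead work on the representation side: you show that the left regular representation $\lambda$ must factor through $q\colon G\to G/G^{(1)}$ via the bijection between nondegenerate $*$-representations of $C^*(\,\cdot\,)$ and unitary group representations, and then invoke faithfulness of $\lambda$ to conclude $G^{(1)}=\{e\}$ directly. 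The paper's argument is shorter and purely algebraic once the cited fact about $L^1(G)$ is granted; your argument is a bit longer but stays entirely within the representation-theoretic framework and yields the sharper conclusion $G^{(1)}=\{e\}$ without passing through commutativity of $C^*(G)$. The functoriality step you flag as ``the only delicate point'' is indeed the content of the construction in \cite[Prop.~8.C.8]{BkHa19}, so the citation suffices.
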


\begin{proof}
	If the group $G$ is abelian then $G^{(1)}=\{\1\}$ and $\theta$ is a $*$-isomorphism, hence $\Jc_G=\{0\}$. 
	Conversely, if $\Jc_G=\{0\}$, then the short exact sequence~\eqref{quotient} shows that $\theta \colon C^*(G)\to C^*(G/G^{(1)})$ is a $*$-isomorphism. 
	Since the quotient group $G/G^{(1)}$ is abelian, its $C^*$-algebra is commutative. 
	It then follows that $C^*(G)$ is commutative, which in turn implies that its dense $*$-subalgebra $L^1(G)$ is commutative, and this further implies that the group $G$ is abelian. 
	(See \cite[A.3.1, page 321]{Ri60} and also \cite[Th. (20.24)]{HR63} for a more general result.)
\end{proof}

	We denote by $P_1(G)\subseteq L^\infty(G)$ the set of all positive-definite functions $f\colon G\to\CC$ with $f(\1)=1$, 
	and let 
	$$P_{1,c}(G):=\{f\in P_1(G)\mid (\forall x,y\in G)\quad f(xyx^{-1})=f(y)\}$$
	be the set of all \emph{central} positive-definite functions. 
	
	For every $f\in P_1(G)$, let $\varphi_f\colon C^*(G)\to\CC$  be its corresponding state of $C^*(G)$.
	By \cite[Th. 13.5.2]{Di64}, the affine mapping 
	\begin{equation}\label{affine}
	\Phi\colon P_1(G)\to S(C^*(G)), \quad f\mapsto\varphi_f
	\end{equation}
	is a homeomorphism when $P_1(G)$ is endowed with the topology of uniform convergence on the compact subsets of $G$, while the state space $S(C^*(G))$ is endowed with its weak$^*$-topology. 
	It is also well-known that 
	\begin{equation}\label{affine_char}
	\Phi(P_{1,c}(G))=T(C^*(G)).
	\end{equation} 
	(See e.g., \cite[18.1.3 and proof of Prop. 17.3.1]{Di64} and \cite[\S 1.1]{FSW17}.)
	
	\begin{remark}\normalfont
	 In the above setting, we note for later use that $\Hom(G,\TT)\subseteq P_{1,c}(G)$. 
	Thus if $\chi\in \Hom(G,\TT)$ 
	and if we denote by $\widetilde{\chi}\colon C^*(G)\to\CC$ the $*$-representation obtained by integrating the unitary representation $\chi$, then 
	for every $g\in L^1(G)\subseteq C^*(G)$ we have 
	$\widetilde{\chi}(g)=\int_G g(x)\chi(x)\de x=\varphi_\chi(g)$, 
	hence 
	\begin{equation}\label{char}
	\varphi_\chi=\widetilde{\chi} \text{ for every }\chi\in \Hom(G,\TT). 
	\end{equation} 
	\end{remark}

\begin{lemma}\label{L2}
Let $G$ be a connected, 
locally compact group and $A:=C^*(G)$. 
If $\varphi\in\ext\, T(A)$, then 
there exist a positive integer $n\ge 1$ 
such that $\pi_\varphi$ is weakly equivalent to an irreducible representation 
of $A$ on an $n$-dimensional Hilbert space. 
If moreover the group $G$ is solvable, then $n=\dim\Hc_\varphi=1$ and 
there exists $\chi\in\Hom(G,\TT)$ with  $\pi_\varphi=\widetilde{\chi}$. 
\end{lemma}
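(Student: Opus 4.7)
The plan is to exploit Lemma~\ref{L1} to reduce the first assertion to a structural fact about finite factor representations of $C^*(G)$, and then to use solvability to collapse the resulting dimension to~$1$.

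By Lemma~\ref{L1}\eqref{L1_item2}--\eqref{L1_item3}, $M:=\pi_\varphi(A)''$ is a finite factor on which $\tau_\varphi$ is a faithful normal tracial state, so $M$ is either of type $I_n$ for some $n\ge 1$ or of type $II_1$. The crux of the first part is to exclude the $II_1$ alternative: I would invoke the classical structural theorem (going back to Moore, and essentially recorded in Dixmier's monograph \cite{Di64}) that for any connected locally compact group $G$, every finite factor representation of $C^*(G)$ is of type~$I$. Granting this, $M\cong M_n(\CC)$. Since $\pi_\varphi(A)$ is strong-operator dense in $M$, standard multiplicity theory for type~$I$ factors produces a Hilbert space $\Kc$, a unitary identification $\Hc_\varphi\cong\CC^n\otimes\Kc$, and an irreducible $*$-representation $\rho\colon A\to M_n(\CC)$ with $\pi_\varphi(\cdot)=\rho(\cdot)\otimes I_\Kc$. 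In particular $\Ker\pi_\varphi=\Ker\rho$, so $\pi_\varphi$ is weakly equivalent to the irreducible $n$-dimensional representation~$\rho$.

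For the solvable refinement, let $U\colon G\to U(n)$ be the continuous unitary representation of $G$ whose integrated form is $\rho$. Solvability of $G$ furnishes a closed normal series $G=G_0\supseteq G_1\supseteq\cdots\supseteq G_k=\{\1\}$ with abelian successive quotients; applying $U$ and taking closures in $U(n)$ yields a nested sequence $H_i:=\overline{U(G_i)}$ inside $H:=H_0=\overline{U(G)}$. Continuity of the commutator map keeps $[H_i,H_i]\subseteq H_{i+1}$: for $h,h'\in H_i$ approximated by nets in $U(G_i)$, the corresponding commutators lie in $[U(G_i),U(G_i)]\subseteq U(G_{i+1})$, and the limit lands in $H_{i+1}$. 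Since $H_k=\{\1\}$, the compact connected Lie group $H$ is solvable, hence a torus, hence abelian. Therefore $U(G)\subseteq H$ is abelian, and Schur's lemma applied to the irreducible $U$ forces $n=1$.

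Once $n=1$, $M\cong\CC$, and cyclicity of $\xi_\varphi$ forces $\dim\Hc_\varphi=1$; thus $\pi_\varphi$ is a $1$-dimensional $*$-representation of $C^*(G)$, corresponding via integration to a unique $\chi\in\Hom(G,\TT)$, and \eqref{char} yields $\pi_\varphi=\widetilde{\chi}$. The genuine obstacle is the appeal to the theorem that finite factor representations of connected locally compact groups are automatically type~$I$; the rest is elementary, essentially reducing to the fact that a compact connected solvable Lie group is a torus.
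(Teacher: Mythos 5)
Your proposal is correct and follows essentially the same route as the paper: Lemma~\ref{L1} gives a finite factor, connectedness of $G$ rules out the type~$II_1$ alternative, and solvability collapses the resulting type~$I_n$ factor to $n=1$. The ``genuine obstacle'' you flag is exactly the paper's appeal to Kadison--Singer \cite[Th.~1]{KS52} (not Moore), and your reduction of the $n=1$ step to ``a compact connected solvable Lie group is a torus'' is a valid, slightly more self-contained substitute for the paper's citation of Lie's theorem, with the added merit of handling non-Lie locally compact $G$ explicitly.
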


\begin{proof}
It follows by Lemma~\ref{L1} that $\pi_\varphi(A)''$ is a finite factor. 
On the other hand, since the representation $\pi_\varphi\colon A=C^*(G)\to\Bc(\Hc_\varphi)$ is cyclic, hence nondegenerate, 
it corresponds to a unitary representation of~$G$. 
Therefore, since the group $G$ is connected, it follows by \cite[Th. 1]{KS52} that $\pi_\varphi$ has no type~II$_1$ direct summand. 
Thus the finite factor $\pi_\varphi(A)''$ is not type~II$_1$, and then it is type I$_n$ for some positive integer $n$, that is, 
one has a $*$-isomorphism $\pi_\varphi(A)''\simeq M_n(\CC)$. 
Since $\pi_\varphi(A)$ is dense in $\pi_\varphi(A)''$ with respect to the strong operator topology, it follows that $\pi_\varphi(A)\simeq M_n(\CC)$.

Since $\pi_\varphi$ is a factor representation, there exists an irreducible $*$-representation $\chi\colon A=C^*(G)\to \Bc(\Hc_\chi)$ 
(see \cite[3.9.1(c) and 5.7.6((b),(d))]{Di64}) 
with $\Ker\pi_\varphi=\Ker\chi$. 
One then  has $*$-isomorphisms 
$$\chi(A)\simeq A/\Ker\chi=A/\Ker\pi_\varphi\simeq \pi_\varphi(A)\simeq M_n(\CC)$$
and it then  follows that $\dim\Hc_\chi=n$. 
Moreover, the irreducible $*$-representation $\chi$ is obtained by integrating a certain unitary irreducible representation of $G$, 
denoted also as $\chi\colon G\to U(n)\subseteq M_n(\CC)$. 

If moreover the group $G$ is solvable, we obtain $n=1$ by Sophus Lie's classical theorem. 
(See e.g., \cite[Part I, Ch. V, \S 5]{Se06}.) 
Therefore $\pi_\varphi(A)=\pi_\varphi(A)''=\CC \1$. 
Then, since $\xi_\varphi\in \Hc_\varphi$ is a cyclic vector for the representation $\pi_\varphi$ we obtain $\Hc_\varphi=\CC\xi_\varphi$. 
Thus $\dim\Hc_\varphi=1$, hence $\chi\in\Hom(G,\TT)$, and this completes the proof. 
\end{proof}

\begin{proposition}\label{P5}
If $G$ is a connected, 
 locally compact, solvable group, then the  map $\Phi\vert_{\Hom(G,\TT)}\colon \Hom(G,\TT)\to \ext \, T(C^*(G))$, $\chi\mapsto \varphi_\chi$ 
is a homeomorphism. 
\end{proposition}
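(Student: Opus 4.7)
The plan is to show that $\Phi\colon P_1(G)\to S(C^*(G))$, which is already a homeomorphism by \cite[Th.~13.5.2]{Di64}, restricts to a bijection between $\Hom(G,\TT)$ and $\ext\,T(C^*(G))$; the homeomorphism statement then follows for free, since $\Hom(G,\TT)$ carries the subspace topology from $P_1(G)$ (uniform convergence on compact subsets of $G$) and $\ext\,T(C^*(G))$ carries the weak$^*$ subspace topology from $S(C^*(G))$, and the restriction of a homeomorphism to any subset is a homeomorphism onto its image. Write $A:=C^*(G)$ throughout.

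First I verify $\Phi(\Hom(G,\TT))\subseteq \ext\,T(A)$. Fix $\chi\in\Hom(G,\TT)$. Then $\widetilde{\chi}\colon A\to\CC$ is a one-dimensional $*$-representation, hence a character of $A$ and in particular a pure state. By \eqref{char}, $\Phi(\chi)=\varphi_\chi=\widetilde{\chi}$. Since $\widetilde{\chi}$ is tracial and $T(A)\subseteq S(A)$, any convex decomposition $\widetilde{\chi}=t\tau_1+(1-t)\tau_2$ with $\tau_1,\tau_2\in T(A)$ is in particular a convex decomposition inside $S(A)$, which forces $\tau_1=\tau_2=\widetilde{\chi}$ by purity. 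Thus $\Phi(\chi)\in\ext\,T(A)$.

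Next I verify the reverse inclusion $\ext\,T(A)\subseteq \Phi(\Hom(G,\TT))$. Given $\varphi\in\ext\,T(A)$, Lemma~\ref{L2} produces $\chi\in\Hom(G,\TT)$ with $\pi_\varphi=\widetilde{\chi}$ and $\dim\Hc_\varphi=1$. Since $\varphi\in T(A)$ means $\Vert\xi_\varphi\Vert=1$ and $\Hc_\varphi=\CC\xi_\varphi$, formula \eqref{GNS} collapses to $\varphi(a)=\pi_\varphi(a)=\widetilde{\chi}(a)$ for every $a\in A$; invoking \eqref{char} again gives $\varphi=\widetilde{\chi}=\varphi_\chi=\Phi(\chi)$. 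Injectivity of $\Phi|_{\Hom(G,\TT)}$ is inherited from injectivity of $\Phi$ on all of $P_1(G)$, so combining the two inclusions yields the desired bijection, and the paragraph above then upgrades it to a homeomorphism.

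I do not foresee a substantive obstacle: the entire content sits in Lemma~\ref{L2} (which is where the connectedness plus solvability of $G$ was used to force the GNS representation of an extreme tracial state to be one-dimensional) and in the already-established homeomorphism $\Phi$. The only point requiring a moment of care is the topology convention on $\Hom(G,\TT)$; one simply declares it to be the one inherited from $P_1(G)$, which for characters coincides with uniform convergence on compact subsets of $G$ and is the standard choice.
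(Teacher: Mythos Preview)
Your proof is correct and follows essentially the same approach as the paper: the paper's proof is simply ``Use Lemma~\ref{L2} and \eqref{affine}--\eqref{affine_char},'' and you have spelled out exactly what that entails, namely using the affine homeomorphism $\Phi$ together with Lemma~\ref{L2} (for surjectivity) and the purity of characters (for the forward inclusion), with \eqref{char} bridging $\varphi_\chi$ and $\widetilde\chi$.
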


\begin{proof}
Use Lemma~\ref{L2} and \eqref{affine}--\eqref{affine_char}.
\end{proof}

In the statement of the next theorem, we use the notation in \eqref{defG1} and \eqref{Jc}.

\begin{theorem}\label{P6}	
Let $G$ be  a connected, 
 locally compact, solvable group.
Then 
the following assertions hold: 
	\begin{enumerate}[{\rm (i)}]
		\item\label{P6_item1} The closed two-sided ideal $\Jc_G$ is the interesection of all kernels of tracial states of $C^*(G)$, that is, 
		$$
		\Jc_G
		=\bigcap\limits_{\varphi\in T(C^*(G))}\Ker\varphi. 
		$$
		\item\label{P6_item2}
		$\Jc_G$ 
		is equal to the closed linear span of any of the sets 
		$$\{ab-ba\mid a,b\in C^*(G)\} \quad \text{and} \quad \{a\in C^*(G)\mid a^2=0\}.$$
		\item\label{P6_item1bis} 
		The mapping $T(C^*(G/G^{(1)}))\to T(C^*(G))$, $\tau\mapsto \tau\circ\theta$, is bijective. 
		 \end{enumerate} 
\end{theorem}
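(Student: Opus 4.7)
My plan is to establish the three items in order, with~\eqref{P6_item1bis} falling out immediately from~\eqref{P6_item1}. The main inputs are Proposition~\ref{P5}, which identifies $\ext\,T(C^*(G))$ with $\Hom(G,\TT)$, and the Krein-Milman identity~\eqref{Krein-Milman} together with~\eqref{kernels}.

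For~\eqref{P6_item1}, I first observe that every $\chi\in\Hom(G,\TT)$ is automatically trivial on $G^{(1)}$ (because $\TT$ is abelian), so each $\widetilde{\chi}$ factors through $\theta\colon C^*(G)\to C^*(G/G^{(1)})$; hence $\Jc_G=\Ker\theta\subseteq\Ker\widetilde{\chi}$ for every such $\chi$. Conversely, as $\chi$ ranges over $\Hom(G,\TT)=\Hom(G/G^{(1)},\TT)$, the functionals $\widetilde{\chi}$ exhaust the Gelfand spectrum of the commutative algebra $C^*(G/G^{(1)})$ and so separate its points, which gives $\bigcap_{\chi}\Ker\widetilde{\chi}=\theta^{-1}(\{0\})=\Jc_G$. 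Combining Proposition~\ref{P5} with~\eqref{Krein-Milman} and~\eqref{kernels}, and discarding the zero functional (whose kernel is all of $C^*(G)$), we get $\Jc_G=\bigcap_{\tau\in T(C^*(G))}\Ker\tau$. For~\eqref{P6_item1bis}, every $\tau\in T(C^*(G))$ then vanishes on $\Ker\theta$ and factors uniquely through $\theta$ as $\tau=\sigma\circ\theta$ for a tracial state $\sigma$ of $C^*(G/G^{(1)})$; conversely $\sigma\mapsto\sigma\circ\theta$ maps $T(C^*(G/G^{(1)}))$ to $T(C^*(G))$ because $\theta$ is a surjective $*$-morphism, and surjectivity of $\theta$ forces injectivity of this correspondence, giving the desired bijection.

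For~\eqref{P6_item2}, set $I_1:=\overline{\spa}\{ab-ba\mid a,b\in C^*(G)\}$ and $I_2:=\overline{\spa}\{a\in C^*(G)\mid a^2=0\}$. The containments $I_1,I_2\subseteq\Jc_G$ are easy: tracial states annihilate commutators, so~\eqref{P6_item1} yields $I_1\subseteq\Jc_G$; and $\theta$ maps any $a$ with $a^2=0$ into the commutative algebra $C^*(G/G^{(1)})$, where $\theta(a)^2=0$ forces $\theta(a)=0$, so $I_2\subseteq\Jc_G$. For the reverse inclusions I plan to invoke Hahn-Banach, reducing each case to showing that every $\varphi\in A^*$ (with $A:=C^*(G)$) annihilating $I_j$ already vanishes on $\Jc_G$. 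For $j=1$ the annihilator $I_1^{\perp}$ consists of the bounded tracial functionals, and the Jordan decomposition for tracial functionals (via the unique normal extension to $A^{**}$ followed by decomposition into positive normal tracial pieces, which are themselves non-negative scalar multiples of tracial states) writes any such $\varphi$ as a complex linear combination of tracial states, each of which annihilates $\Jc_G$ by~\eqref{P6_item1}. For $j=2$ the goal is to show that any $\varphi\in I_2^{\perp}$ is automatically tracial and then conclude as above. Passing to the normal extension $\widetilde\varphi$ on $A^{**}$, the idea is that for any projection $e\in A^{**}$ and any $x\in A^{**}$ the element $exe^{\perp}$ is index-$2$ nilpotent, so $\widetilde\varphi(exe^{\perp})=0=\widetilde\varphi(e^{\perp}xe)$ would yield $\widetilde\varphi(ex)=\widetilde\varphi(xe)$; spectral decomposition of self-adjoint elements then promotes this to $\widetilde\varphi(yx-xy)=0$ for all $x,y$, giving tracialness. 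The main obstacle I foresee is precisely the lift from ``$\varphi$ annihilates index-$2$ nilpotents in $A$'' to ``$\widetilde\varphi$ annihilates $exe^{\perp}$ in $A^{**}$'' for arbitrary projections $e\in A^{**}$, which calls for a Kaplansky-type density approximation of such projections by contractions in $A$ together with a careful normality argument.
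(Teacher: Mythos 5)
Your treatment of items \eqref{P6_item1} and \eqref{P6_item1bis} follows the same route as the paper: the hard half of \eqref{P6_item1} is the identification $\Jc_G=\bigcap_{\chi\in\Hom(G,\TT)}\Ker\widetilde{\chi}$ via the characters of the abelian quotient, combined with Proposition~\ref{P5}, \eqref{Krein-Milman} and \eqref{kernels}; and \eqref{P6_item1bis} is then the routine factorization through $\theta$. For the commutator half of \eqref{P6_item2}, the paper sandwiches the closed span of commutators between $A_0+\ie A_0$ and $\bigcap_{\varphi\in T(A)}\Ker\varphi$ and quotes \cite[Thms. 2.6, 2.9]{CP79}; your Hahn--Banach argument --- extend a bounded tracial functional normally to $A^{**}$, check that the extension is still tracial by separate weak$^*$-continuity of multiplication, and use the unitary invariance plus uniqueness of the Jordan decomposition to split it into positive normal tracial pieces --- is a correct, essentially self-contained substitute for that citation.

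The genuine gap is in the square-zero half of \eqref{P6_item2}. You reduce $\Jc_G\subseteq\overline{\spa}\{a\mid a^2=0\}$ to the claim that every $\varphi\in A^*$ annihilating all square-zero elements \emph{of $A$} is tracial, and you propose to test $\widetilde{\varphi}$ on elements $ex(\1-e)$ for projections $e\in A^{**}$. But these are square-zero elements of $A^{**}$, while your hypothesis only controls $\varphi$ on square-zero elements of $A$; the set $\{a\in A\mid a^2=0\}$ is not a linear subspace, the condition $a^2=0$ is not preserved under strong or weak$^*$ limits, and a Kaplansky-type approximation of $e$ by contractions $a_i\in A$ produces elements $a_i x(\1-a_i)$ that are in general not square-zero, so the hypothesis cannot be fed into the limit. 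There is no routine normality/density argument that closes this; the statement that the closed span of square-zero elements coincides with the closed span of commutators is a nontrivial theorem about arbitrary $C^*$-algebras, proved in \cite[Prop. 2.2]{AEVBS} by a direct norm-approximation of commutators by finite sums of square-zero elements (not by a bidual argument), and this is exactly what the paper cites at this point. As written, your proof of \eqref{P6_item2} for the square-zero set does not go through; you should either invoke \cite{AEVBS} or reproduce such an approximation.
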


\begin{proof}
	\eqref{P6_item1} We actually prove the following equality:
	\begin{equation}
	\label{P6_item1_eq1}
	\Jc_G
	=\bigcap\limits_{\chi\in \Hom(G,\TT)}\Ker\varphi_\chi 
	=\bigcap\limits_{\varphi\in T(C^*(G))}\Ker\varphi. 
	\end{equation} 
It follows by Proposition~\ref{P5}
and the consequence of the Kre\u\i n-Milman theorem~\eqref{Krein-Milman} along with \eqref{kernels} and \eqref{scalars} that 
\begin{align*}
\bigcap\limits_{\chi\in \Hom(G,\TT)}\Ker\varphi_\chi 
& =\bigcap\limits_{\varphi\in \ext\,T(C^*(G))}\Ker\varphi \\
& =\bigcap\limits_{\varphi\in \overline{\co}(\{0\}\cup \ext\,T(C^*(G)))}\Ker\varphi \\
& =\bigcap\limits_{\varphi\in T_{\le 1}(C^*(G))}\Ker\varphi \\
&=\bigcap\limits_{\varphi\in T(C^*(G))}\Ker\varphi  ,
\end{align*}
hence the second equality in~\eqref{P6_item1_eq1} holds true. 

We now prove the first equality in~\eqref{P6_item1_eq1}. 
Let $q\colon G\to G/G^{(1)}$ be  the canonical quotient map.
Then, 
since the group $G/G^{(1)}$ is abelian,  the map
$$q^*\colon  \widehat{G/G^{(1)}}\to\Hom(G,\TT),\quad \omega\mapsto \omega\circ q$$
is bijective and is actually a homeomorphism. 
Using the canonical homeomorphism
$\widehat{G}\simeq\widehat{C^*(G)}$ along with the short exact sequence~\eqref{quotient}, we then obtain 
$$\Jc_G=\bigcap_{\omega\in \widehat{G/G^{(1)}}}\Ker \widetilde{\omega\circ q}
=\bigcap_{\chi\in\Hom(G,\TT)}\Ker \widetilde{\chi}
=\bigcap_{\chi\in\Hom(G,\TT)}\Ker \varphi_\chi$$
where the later equality follows by \eqref{char}.

\eqref{P6_item2}	
Let us denote by $A_0$ the set of all norm-convergent series $\sum\limits_{n\ge1}(a_na_n^*-a_n^*a_n)$ for $a_1,a_2,\ldots\in A:=C^*(G)$.   
If we denote by $A_{00}$ the closed linear span of the set $\{ab-ba\mid a,b\in C^*(G)\}$, then 
it is clear that 
$$A_0+\ie A_0\subseteq A_{00}\subseteq 
\bigcap\limits_{\varphi\in T(A)}\Ker\varphi=A_0+\ie A_0$$
where the latter equality follows by \cite[Thms. 2.6 and 2.9]{CP79}. 
Therefore, the above inclusions are actually equalities and then, 
using \eqref{P6_item1_eq1}, 
we obtain $A_{00}=\Jc_G$. 
On the other hand, $A_{00}$ coincides with the closed linear span of $\{a\in C^*(G)\mid a^2=0\}$ by \cite[Prop. 2.2]{AEVBS}. 

\eqref{P6_item1bis} 
As $\theta$  is a surjective $*$-morphism, the mapping 
$T(C^*(G/G^{(1)}))\to T(C^*(G))$, $\tau\mapsto \tau\circ\theta$, is well defined and injective. 
To prove that this mapping is surjective, let $\varphi\in T(C^*(G))$ arbitrary. 
Then $\Ker\theta=\Jc_G\subseteq\Ker\varphi$ by \eqref{P6_item1}, hence the functional 
$\tau\colon C^*(G/G^{(1)})\to\CC$, $\theta(a)\mapsto \varphi(a)$, 
is well defined, and moreover $\tau\circ\theta=\varphi$. 
Since the group $G/G^{(1)}$ is abelian, we directly obtain $\tau\in T(C^*(G/G^{(1)}))$, and this completes the proof. 
\end{proof}

We are now in a position to prove the following generalization of \cite[Prop. 4.4]{BB18} to groups that need not be type~I.

\begin{corollary}\label{P6_item3}
If $G$ is a connected, second countable, locally compact, solvable group, 
then there exists a faithful $\varphi\in T(C^*(G))$ if and only if $G$ is abelian. 
\end{corollary}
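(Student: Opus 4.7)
The plan is to show each implication separately, with the nontrivial direction being an almost immediate consequence of the machinery already in place.

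For the implication ``faithful tracial state exists $\Rightarrow$ $G$ abelian'', I would argue as follows. Assume $\varphi\in T(C^*(G))$ is faithful, so $\Ker\varphi=\{0\}$. Since $\varphi$ appears in the intersection on the right-hand side of Theorem~\ref{P6}\eqref{P6_item1}, we obtain
\[
\Jc_G=\bigcap\limits_{\psi\in T(C^*(G))}\Ker\psi\subseteq \Ker\varphi=\{0\},
\]
hence $\Jc_G=\{0\}$. Lemma~\ref{R4} then forces $G$ to be abelian. This direction uses neither second countability nor any further computation.

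For the converse ``$G$ abelian $\Rightarrow$ faithful tracial state exists'', I would invoke Pontryagin duality: when $G$ is abelian, $C^*(G)$ is $*$-isomorphic to the commutative $C^*$-algebra $C_0(\widehat{G})$ via the Fourier transform, and every state of a commutative $C^*$-algebra is automatically tracial. It therefore suffices to produce a faithful state on $C_0(\widehat{G})$, which is equivalent to producing a Radon probability measure on $\widehat{G}$ with full support. Here is where the hypothesis that $G$ be second countable enters: it implies that $\widehat{G}$ is second countable (and locally compact Hausdorff), hence separable and metrizable. Picking a countable dense subset $\{x_n\}_{n\ge1}\subseteq \widehat{G}$, the measure $\mu=\sum_{n\ge1}2^{-n}\delta_{x_n}$ is a Borel probability measure of full support on $\widehat{G}$, and integration against $\mu$ defines the desired faithful tracial state on $C_0(\widehat{G})\cong C^*(G)$.

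The main obstacle is essentially rhetorical rather than technical, since both implications collapse once Theorem~\ref{P6}\eqref{P6_item1} and Lemma~\ref{R4} are in hand: one has to observe that the kernels of the tracial states detect precisely the ideal $\Jc_G$ which measures commutativity, and that for an abelian second countable $G$ the dual $\widehat{G}$ carries an evident fully supported probability measure. No case analysis or further representation-theoretic input is required.
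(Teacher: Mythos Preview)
Your approach matches the paper's in both directions, but there is one slip in the nontrivial implication: a faithful state $\varphi$ does \emph{not} satisfy $\Ker\varphi=\{0\}$ in general. Faithfulness means only that $(C^*(G))^+\cap\Ker\varphi=\{0\}$; for instance, integration against Lebesgue measure is a faithful state on $C([0,1])$ whose kernel is the (codimension-one) space of functions with vanishing integral. Thus the line ``$\Jc_G\subseteq\Ker\varphi=\{0\}$'' does not follow as written.

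The fix is immediate and is exactly what the paper does: from Theorem~\ref{P6}\eqref{P6_item1} you still get $\Jc_G\subseteq\Ker\varphi$, hence $(C^*(G))^+\cap\Jc_G\subseteq (C^*(G))^+\cap\Ker\varphi=\{0\}$ by faithfulness. Since the closed ideal $\Jc_G$ is itself a $C^*$-algebra, it is linearly spanned by its positive cone, so $\Jc_G=\{0\}$, and Lemma~\ref{R4} finishes the argument. Your treatment of the abelian direction is identical to the paper's.
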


\begin{proof}
If $G$ is abelian, then its dual group $\widehat{G}$ is second countable 
by \cite[Prop. A.G.3]{BkHa19}. 
Then we may select a dense sequence $\{\chi_n\}_{n\ge 1}$ in $\widehat{G}$. 
Denoting by $\delta_\chi$ the Dirac measure concentrated at any point $\chi\in\widehat{G}$, 
it then follows that $\mu:=\sum\limits_{n\ge 1}\frac{1}{2^n}\delta_{\chi_n}$ is a probability Radon measure on $\widehat{G}:=\Hom(G,\TT)$ with $\supp\mu=\widehat{G}$. 
We then define 
$$f\colon G\to\CC,\quad f(x):=\int\limits_{\widehat{G}}\chi(x)\de\mu(\chi).$$
Since $G$ is abelian, it is clear that $f\in P_{1,c}(G)$.
On the other hand the Fourier transform gives the $*$-isomorhism $C^*(G)\simeq \Cc_0(\widehat{G})$. 
Then, 
since $\supp\mu=\widehat{G}$, it is straightforward to check that $\varphi_f\in T(C^*(G))$ is faithful. 

Conversely, if there exists $\varphi\in  T(C^*(G))$ with $(C^*(G))^+\cap\Ker\varphi=\{0\}$ then, by~\eqref{P6_item1_eq1}, we obtain  
$(C^*(G))^+\cap\Jc_G=\{0\}$. 
Since $\Jc_G$ is a $C^*$-algebra, this implies $\Jc_G=\{0\}$ since $\Jc_G$ is linearly spanned by its positive cone $(C^*(G))^+\cap\Jc_G$. 
(See e.g., \cite[1.5.7(2)]{Di64}.)
Then Lemma~\ref{R4} shows that the group $G$ is abelian. 
Alternatively, $\Jc_G=\{0\}$ implies by Theorem~\ref{P6}\eqref{P6_item2} that $C^*(G)$ is commutative, and then the group $G$ is abelian as in the proof of Lemma~\ref{R4}. 
\end{proof}

\begin{corollary}\label{simple_QD_emb}
If $G$ is a connected, second countable, locally compact, solvable group, 
then the following assertions are equivalent: 
\begin{enumerate}[{\rm(i)}]
	\item\label{simple_QD_emb_item1} 
	$C^*(G)$ embeds as a closed $*$-subalgebra of a unital simple AF-algebra.
	\item\label{simple_QD_emb_item3} 
	$C^*(G)$ embeds as a closed $*$-subalgebra of a unital  quasidiagonal simple $C^*$-algebra. 
	\item\label{simple_QD_emb_item4} 
	$G$ is abelian. 
\end{enumerate}
\end{corollary}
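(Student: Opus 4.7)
The plan is to prove (i)$\Rightarrow$(iii)$\Rightarrow$(iv)$\Rightarrow$(i), using the previous corollary to handle the deep direction and standard facts for the other two.

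First, (i)$\Rightarrow$(iii) is immediate: every AF-algebra is quasidiagonal, so a unital simple AF-algebra is in particular a unital simple quasidiagonal $C^*$-algebra. No work is needed here.

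For (iii)$\Rightarrow$(iv), which I regard as the main content, I would argue as follows. Suppose $C^*(G)$ embeds as a $C^*$-subalgebra into a unital simple quasidiagonal $C^*$-algebra $B$. By Voiculescu's theorem (every unital quasidiagonal $C^*$-algebra admits a tracial state), there is some $\sigma\in T(B)$. Now $\Ker\sigma$ is a closed two-sided ideal of $B$; since $B$ is simple and $\sigma\ne 0$, we must have $\Ker\sigma=\{0\}$, i.e.\ $\sigma$ is faithful. Restricting $\sigma$ along the embedding $C^*(G)\hookrightarrow B$ yields a tracial state $\varphi\in T(C^*(G))$ that is still faithful. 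Corollary~\ref{P6_item3} then forces $G$ to be abelian. The subtle point to double-check is that the trace on $B$ guaranteed by Voiculescu is genuinely faithful on $B$ (which uses simplicity crucially) and restricts to a faithful trace on the subalgebra $C^*(G)$ (which is automatic because the restriction of a faithful positive functional to a $C^*$-subalgebra is still faithful).

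Finally, for (iv)$\Rightarrow$(i), assume $G$ is abelian. Then $C^*(G)\cong \Cc_0(\widehat G)$ by the Fourier transform, and $\widehat G$ is a second countable locally compact Hausdorff space. Passing to the one-point compactification $\widehat G^+$, which is a compact metrizable space, we obtain a unital inclusion $\Cc_0(\widehat G)\hookrightarrow \Cc(\widehat G^+)$. Since $\widehat G^+$ is a compact metric space, it is a continuous image of the Cantor set $K$, giving an embedding $\Cc(\widehat G^+)\hookrightarrow \Cc(K)$. Finally, $\Cc(K)$ embeds into the universal UHF-algebra $M_{2^\infty}$ (as the inductive limit of the obvious diagonal inclusions $\CC^{2^n}\hookrightarrow M_{2^n}(\CC)$), which is a unital simple AF-algebra. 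Composing gives the desired embedding.

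The chief obstacle is the implication (iii)$\Rightarrow$(iv); everything rests on combining Voiculescu's theorem on traces of unital quasidiagonal $C^*$-algebras with the simplicity of the ambient algebra to manufacture a faithful trace, after which Corollary~\ref{P6_item3} finishes the argument at once. The other two implications are essentially bookkeeping.
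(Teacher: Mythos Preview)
Your overall strategy matches the paper's, and the implications (i)$\Rightarrow$(iii) and (iv)$\Rightarrow$(i) are handled correctly and essentially as in the paper. However, there is a genuine error in your argument for (iii)$\Rightarrow$(iv): the kernel $\Ker\sigma$ of a tracial state $\sigma$ is \emph{not} a two-sided ideal in general. (Take $B=M_2(\CC)$ with its normalized trace: $\Ker\sigma$ is the $3$-dimensional space of trace-zero matrices, which is certainly not an ideal of the simple algebra $M_2(\CC)$.) What \emph{is} a closed two-sided ideal is the kernel of the GNS representation $\pi_\sigma$, or equivalently the left kernel $N_\sigma=\{b\in B:\sigma(b^*b)=0\}$, and these coincide because $\sigma$ is tracial. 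Simplicity of $B$ then gives $\Ker\pi_\sigma=\{0\}$, after which one still needs the additional step $B^+\cap\Ker\sigma=B^+\cap\Ker\pi_\sigma$ (this is precisely Lemma~\ref{L1}\eqref{L1_item1}) to conclude that $\sigma$ is faithful. With this correction the argument goes through, and it is exactly how the paper proceeds.

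A smaller point: the restriction of $\sigma$ to $C^*(G)$ need not have norm $1$, since the embedding is not assumed unital; you only obtain a faithful nonzero positive tracial functional, which must be normalized before Corollary~\ref{P6_item3} can be invoked. The paper is careful about this.
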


\begin{proof}
%
%
\eqref{simple_QD_emb_item1}$\Rightarrow$\eqref{simple_QD_emb_item3}: 
Clear.

\eqref{simple_QD_emb_item3}$\Rightarrow$\eqref{simple_QD_emb_item4}: 
Let us assume that there exists a unital  quasidiagonal simple $C^*$-algebra $A$ with an embedding  $C^*(G)\subseteq A$. 
Since $A$ is unital quasidiagonal, it follows by \cite[Prop. V.4.2.7]{Bl06} that $A$ has a tracial state $\varphi\colon A\to\CC$. 
Since $A$ is simple, we have that  $\Ker\pi_\varphi=\{0\}$. 
Thus Lemma~\ref{L1}\eqref{L1_item1} implies $A^+\cap \Ker\varphi=\{0\}$, that is, $\varphi$ is faithful. 
Then $\varphi\vert_{C^*(G)}$ is a faithful positive tracial functional on $C^*(G)$, which is moreover nonzero since the $C^*$-algebra $C^*(G)$ is generated by its positive cone. 
Therefore $G$ is abelian by Corollary~\ref{P6_item3}. 

\eqref{simple_QD_emb_item4}$\Rightarrow$\eqref{simple_QD_emb_item1}: 
Since $G$ is abelian, there is  a $*$-isomorphism $C^*(G)\simeq\Cc_0(\widehat{G})$. 
Let $\overline{G}$ denote the 1-point compactification of $\widehat{G}$ if 
$\widehat{G}$ is not compact, and $\overline{G}=\widehat{G}$ otherwise. 
Since $G$ is second countable, its dual group $\widehat{G}$ is second countable 
by \cite[Prop. A.G.3]{BkHa19}, 
hence $\overline{G}$ is a compact metrizable space.  
Then there exists a continuous surjective mapping 
$K\to \overline{G}$, where $K$ is the Cantor set, hence one has embeddings
$$C^*(G)\simeq\Cc_0(\widehat{G})\hookrightarrow\Cc(\overline{G})\hookrightarrow\Cc(K).$$
On the other hand, it is well-known that $\Cc(K)$ is $*$-isomorphic to the canonical diagonal Cartan subalgebra of the CAR-algebra $M_{2^\infty}$. 
We thus finally obtain an embedding $C^*(G)\hookrightarrow M_{2^\infty}$,
where $M_{2^\infty}$ is a unital simple AF-algebra. 
\end{proof}

\begin{remark}
\normalfont
The $C^*$-algebras of many connected solvable Lie groups are AF-embeddable as shown in \cite{BB18} and \cite{BB20}, hence Corollary~\ref{simple_QD_emb} shows that such embeddings cannot be done into unital simple AF-algebras in the case of nonabelian groups. 
\end{remark}

\begin{remark}
	\normalfont
	If $G$ is a connected, separable, locally compact, solvable group, using the notation of \cite[Def. 3.4.1]{Br06}, one has 
	\begin{equation}\label{lfd}
	T(C^*(G))={\rm UAT}(C^*(G))_{\rm LFD}
	\end{equation}
	that is, every tracial state of $C^*(G)$ is uniform locally finite dimensional, and in particular is quasidiagonal. 
	(See \cite[\S 3.5]{Br06}.)
	
	In fact, for arbitrary $\varphi\in T(C^*(G))$, there exists $\tau\in T(C^*(G/G^{(1)}))$ with $\varphi=\tau\circ\theta$ by Theorem~\ref{P6}\eqref{P6_item1bis}. 
	The quotient group $G/G^{(1)}$ is an abelian Lie group, 
	hence  $C^*(G/G^{(1)})$ is type~I, and then $\tau\in {\rm UAT}(C^*(G/G^{(1)}))_{\rm LFD}$ by \cite[Cor. 4.4.4]{Br06}. 
	Now, by \cite[Prop. 3.5.6]{Br06}, we obtain $\varphi\in{\rm UAT}(C^*(G))_{\rm LFD}$. 
	This proves the inclusion $\supseteq$ in \eqref{lfd}, while the converse inclusion is obvious. 
	
	However, we recall that there exist connected solvable Lie groups whose $C^*$-algebras are not quasidiagonal. (See \cite[Thm.~2.15]{BB18}.)
\end{remark}

\noindent 
\textbf{Acknowledgment}. 
We wish to thank the Referee for correcting some errors in the first version of our manuscript.

\end{document}